\definecolor{brickred}{rgb}{0.8,0.25,0.33}
\definecolor{majorelleblue}{rgb}{0.38, 0.31, 0.86}
\numberwithin{equation}{section}
\numberwithin{subsection}{section}
\newtheorem*{namedtheorem}{\theoremname}
\newcommand{\theoremname}{testing}
\theoremstyle{plain}
\newtheorem{thm}{Theorem}[section]
\newtheorem{proposition}[thm]{Proposition}
\newtheorem{proposition-definition}[thm]{Proposition-Definition}
\newtheorem{lemma-definition}[thm]{Lemma-Definition}
\newtheorem{lemma}[thm]{Lemma}
\newtheorem{conj}[thm]{Conjecture}
\theoremstyle{definition}
\newtheorem{definition}[thm]{Definition}
\newtheorem{remark}[thm]{Remark}
\newtheorem{question}[thm]{Question}
\newtheorem{construction}[thm]{Construction}
\theoremstyle{remark}
\numberwithin{thm}{section}
\newcommand\ocM{\overline{\mathcal{M}}}
\newcommand\cD{\mathcal{D}}
\newcommand\cL{\mathcal{L}}
\newcommand\cM{\mathcal{M}}
\newcommand\cN{\mathcal{N}}
\newcommand\cO{\mathcal{O}}
\newcommand\cX{\mathcal{X}}
\def\O{\mathcal{O}}
\def\P{\mathbb{P}}
\def\A{\mathbb{A}}
\def\u{\underline}
\newcommand\uB{\underline{B}}
\newcommand\uC{\underline{C}}
\newcommand\uD{\underline{D}}
\newcommand\uE{\underline{E}}
\newcommand\uF{\underline{F}}
\newcommand\uf{\underline{f}}
\newcommand\uS{\underline{S}}
\newcommand\uW{\underline{W}}
\newcommand\uX{\underline{X}}
\newcommand\uY{\underline{Y}}
\renewcommand\AA{\mathbb{A}}
\newcommand\NN{\mathbb{N}}
\newcommand\PP{\mathbb{P}}
\newcommand\ZZ{\mathbb{Z}}
\newcommand\fM{\mathfrak{M}}
\newcommand\arr{\ifinner\to\else\longrightarrow\fi}
\def\displaytimes_#1{\mathrel{\mathop{\times}\limits_{#1}}}
\def\displayotimes_#1{\mathrel{\mathop{\bigotimes}\limits_{#1}}}
\newcommand\Gal{\operatorname{Gal}}
\newcommand\spec{\operatorname{Spec}}
\newcommand\doublelong[2]{\mathbin{\xymatrix{{}\ar@<3pt>[r]^{#1}
\ar@<-3pt>[r]_{#2}&}}}
\newlength{\ignora}
\newcommand{\ug}{{\underline{g}}}
\renewcommand{\setminus}{\smallsetminus}
\begin{document}


\title{$\A^1$-connected varieties of rank one over nonclosed fields}

\author{Qile Chen}

\author{Yi Zhu}

\address[Chen]{Department of Mathematics\\
Columbia University\\
Rm 628, MC 4421\\
2990 Broadway\\
New York, NY 10027\\
U.S.A.}
\email{q\_chen@math.columbia.edu}

\address[Zhu]{Department of Mathematics\\
University of Utah\\
Room 233\\
155 S 1400 E \\
Salt Lake City, UT 84112\\
U.S.A.}
\email{yzhu@math.utah.edu}

\thanks{Chen is partially supported by NSF grant DMS-1403271.}

\subjclass[2010]{14G05, 14M22}
\keywords{stable log maps, $\A^1$-connected varieties, large fields, integral points, Zariski density}

\date{\today}
\begin{abstract}
In this paper, we proved two results regarding the arithmetics of separably $\A^1$-connected varieties of rank one. First we proved 
over a large field, there is an $\AA^1$-curve through any rational point of the boundary, if the boundary divisor is smooth and separably rationally connected. Secondly, we generalize a theorem of Hassett-Tschinkel for the Zariski density of integral points over function fields of curves. 


\end{abstract}
\maketitle

\tableofcontents

\section{Introduction}\label{sec:intro}

Separably $\A^1$-connected varieties has been introduced and studied in \cite{CZ, A1}. They are the analogue of separably rationally connected (SRC) varieties in the non-proper setting. When the non-proper variety admits a log smooth compactification, the recent developments on log stable maps provide us a powerful tool to study $\AA^1$-connectedness. We refer to \cite{KKato} for the basics of logarithmic geometry, and to \cite{GS, Chen, AC, log-bound, Wise} for the details of the theory of stable log maps.

In this paper, we study the arithmetics of simple separably $\A^1$-connected varieties of rank one with the SRC center over nonclosed fields, or equivalently, log pairs with the ambient variety smooth proper and the boundary divisor smooth irreducible SRC. Our results consist of two parts: one is over large fields and the other is over function fields of algebraic curves over an algebraically closed field of characteristic zero. 

\subsection{Over large fields}

According to Iitaka's philosophy, we expect the results for SRC varieties hold for separably $\A^1$-connected varieties in an appropriate form. Our first motivation here is to generalize Koll\'ar's theorem \cite[Theorem 1.4]{Kollar-local}: over a large field $K$, every rational point of a proper SRC variety is contained in a very free rational curve defined over $K$. In the logarithmic setting, we {would like} to find $\A^1$-curves on a proper separably $\A^1$-connected {log} variety defined over $K$. 
Since each $\A^1$-curve also gives a $K$-rational point on the boundary, a necessary condition for existence of $\A^1$-curves is $\uD(K)\neq\emptyset$. Conversely, we have the following:


\begin{thm}\label{thm:local}
Let $K$ be a large field, and $X=(\uX,\uD)$ be a proper, log smooth, simple, and separably $\A^1$-connected $K$-variety of rank one, see Section \ref{ss:notations} for the terminologies. Further assume that $\uD$ is separably rationally connected. Then there exists a very free $\A^1$-curve defined over $K$ through any $K$-rational point of $\uD$.
\end{thm}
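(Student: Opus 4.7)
The plan is to adapt Koll\'ar's strategy from \cite{Kollar-local} for the SRC analogue, using the SRC boundary $\uD$ as a flexibility device: a very free rational curve in $\uD$ will act as a "steering handle'' onto which very free $\A^1$-teeth are grafted, with the remaining tangency marking pinned at the prescribed boundary point $p$.

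First, since $\uD$ is SRC over $K$ with $p \in \uD(K)$, Koll\'ar's theorem over large fields supplies a very free rational curve $R \subset \uD$ defined over $K$ through $p$. Separately, separable $\A^1$-connectedness of $X$ of rank one guarantees, after base change to $\bar K$, a very free $\A^1$-curve $f \colon (C,\sigma) \to X$ whose marking $\sigma$ maps to some point $q \in \uD(\bar K)$. Because $f$ is very free and of rank one, its marking sweeps out a dense open of $\uD$ as $f$ deforms, and I may therefore produce, for any finite collection of general points $q_1,\dots,q_n \in R$, very free $\A^1$-curves $f_i$ over $\bar K$ whose markings lie precisely over $q_i$.

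Next, I would assemble a log comb $g \colon \Sigma \to X$ with handle $R$ and teeth $f_1,\dots,f_n$ attached at $q_1,\dots,q_n$. The resulting genus zero stable log map has a single remaining tangency marking which we arrange to lie at $p \in R$, and its tangency profile agrees with that of a rank-one $\A^1$-curve. Using the positivity of the log normal sheaf of the handle inside $\uD$ together with that of each tooth inside $X$, the deformation theory of stable log maps as developed in \cite{GS, Chen, AC, log-bound} shows that, for $n$ sufficiently large, the obstruction space for simultaneously deforming $g$ and smoothing all nodes vanishes (even after imposing the incidence condition at $p$), so $g$ deforms to a genuine very free $\A^1$-curve through $p$.

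Finally, the above construction identifies a smooth open substack $\cM$ of the moduli of stable log maps of the appropriate numerical type with marking pinned at $p$, defined over $K$, parametrising very free $\A^1$-curves through $p$. It admits a $\bar K$-point by Step 3, and largeness of $K$ then supplies a $K$-point, giving the desired $\A^1$-curve. The main technical obstacle I anticipate is Step 3: verifying in the logarithmic setting that the relevant $\ext^1$ controlling deformations-with-nodes vanishes subject to the constraint that the marked point is fixed at $p$, and that this vanishing survives after Galois descent from $\bar K$ to $K$. This is where the SRC hypothesis on $\uD$, the very freeness supplied by separable $\A^1$-connectedness, and the rank-one hypothesis on $X$ combine: the SRC handle contributes enough positivity in the tangential direction along $\uD$, and the $\A^1$-teeth contribute enough positivity transversal to $\uD$, to overwhelm the incidence condition at a single point $p$.
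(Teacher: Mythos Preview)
Your final step contains a genuine gap. The defining property of a large field $K$ is that a smooth $K$-variety with a $K$-rational point has Zariski-dense $K$-points; it does \emph{not} say that a smooth $K$-variety with a $\overline{K}$-point acquires a $K$-point (the anisotropic conic $x^2+y^2+z^2=0$ over the large field $\RR$ already fails this). Hence the inference ``it admits a $\overline{K}$-point \dots\ and largeness of $K$ then supplies a $K$-point'' is invalid. The obstacle you flag---whether the vanishing of the obstruction space survives descent---is not the issue: vanishing of cohomology is a geometric condition and descends automatically. The real issue is producing a $K$-point of the moduli space in the first place, and your comb $g$, whose teeth $f_i$ are defined only over $\overline{K}$, does not give one.

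The paper addresses exactly this. Over $\overline{K}$ one first produces (Lemma~\ref{lem:1}, essentially your Steps~2--3) a very free $\A^1$-curve through $p$, hence one defined over a finite Galois extension $L/K$. The key step is then Proposition~\ref{prop:Galois-gluing}: glue the \emph{entire Galois orbit} of this curve into an $\A^1$-comb whose handle is a $\PP^1$ contracted to the $K$-point $p$, with the conjugate teeth attached along an $L$-rational Galois orbit of points on the handle, and lift the gluing to the level of log structures $G$-equivariantly via the expansion along $\uD$. The resulting comb is $G$-invariant and descends to a smooth $K$-point of $\u\fM_{\A^1}(X,m\beta;p)$; \emph{now} largeness applies and yields a nearby $K$-point in the open locus of very free $\A^1$-curves. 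Your $K$-rational handle $R\subset\uD$ obtained from Koll\'ar's theorem is a natural idea, but it does not circumvent the need for a Galois-stable collection of teeth and a $G$-equivariant log gluing---that is, the content of Proposition~\ref{prop:Galois-gluing}---and once that proposition is in hand, a contracted $\PP^1$ already suffices as the handle.
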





\subsection{Over function fields}
Let $k$ be an algebraically closed field of characteristic zero. Let $B$ be a smooth projective algebraic $k$-curve, and let $F$ be its function field. Our second motivation is to study arithmetics of $\A^1$-connected varieties over $F$. Based on the work of \cite{KMM,GHS,HT06}, Hassett-Tschinkel proposed the weak approximation conjecture:
\begin{conj}\cite{HT06}
 Proper rationally connected varieties defined over $F$ satisfy the weak approximation. 
\end{conj}

Over number fields, number theorists are also interested in the approximation results for non-proper varieties, i.e. the strong approximation. Note that affine spaces satisfy the strong approximation \cite[Thm.6.13]{Rosen}. From our point of view, $\A^1$-connected varieties are generalizations of affine spaces. We propose the following question:

\begin{question}\label{strong-good}
Does strong approximation hold for $\A^1$-connected varieties over $F$?
\end{question}

A special case of Question \ref{strong-good} is the Zariski density of integral points studied by Hassett-Tschinkel \cite{HT-log-Fano}. Using the log deformation theory, we prove a stronger version of Hassett-Tschinkel's theorem in {the $\AA^1$-connectedness} setting.



 

{
\begin{thm}\label{thm:integral-pts}
Let $X=(\uX,\uD)$ be a log smooth, proper, and $\A^1$-connected variety of rank one with the SRC center defined over $F$.
Given a model $\pi: (\u\cX,\u\cD) \to B$ with the generic fiber $(\uX,\uD)$, 
let $T$ be a non-empty finite set of places on $B$ containing the images of the singularities of $\u\cX$ and $\u\cD$. 

Then for any finite set of places of good reductions $\{b_i\}_{i\in I}$ of $\pi$ away from $T$ \cite[Definition 4]{HT-log-Fano}, and  points $x_i$ in the strongly $\A^1$-uniruled locus of $\u\cX_{b_i}:=\pi^{-1}(b_i)$, there exists an $T$-integral point $\sigma:B\to \u\cX$ such that $s(b_i)=x_i$. 

In particular, $T$-integral points of the family $\pi: (\u\cX,\u\cD) \to B$ are Zariski dense.
\end{thm}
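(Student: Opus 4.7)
The plan is to adapt the Hassett--Tschinkel comb-and-smooth strategy to the logarithmic setting. I build a pre-stable log map whose source is a comb in $\u\cX$ --- a handle that is a $T$-integral section of $\pi$, and teeth that are very free $\A^1$-curves through the prescribed points in the prescribed fibers --- and then smooth this comb in the log category, with the aid of auxiliary teeth lying in the boundary $\u\cD$, to produce the desired $T$-integral section passing through each $x_i$.

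For the handle, a Graber--Harris--Starr-type argument applied to the proper family $\u\cX \to B$, whose generic fiber is rationally connected (being $\A^1$-connected of rank one with SRC center), yields a section $\tau : B \to \u\cX$. The a priori intersections of $\tau$ with $\u\cD$ are moved into $T$ by deforming $\tau$ along free rational curves in $\u\cD$ (which exist since $\uD$ is SRC) and along very free $\A^1$-curves supplied by Theorem~\ref{thm:local} at generic points. After a further generic perturbation I obtain a $T$-integral handle $\sigma_0 : B \to \u\cX$ lying in the smooth locus of $\pi$ and passing through generic points $\sigma_0(b_i) \in \u\cX_{b_i} \setminus \u\cD_{b_i}$.

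For each $b_i$, the hypothesis that $x_i$ lies in the strongly $\A^1$-uniruled locus of $\u\cX_{b_i}$ supplies a very free $\A^1$-curve $C_i \subset (\u\cX_{b_i}, \u\cD_{b_i})$ through $x_i$ meeting the boundary at a single point $y_i$ with contact order one. Very freeness lets me deform $C_i$, keeping $x_i$, so that it also meets $\sigma_0(B)$ transversally at $\sigma_0(b_i)$. To prevent the marked point $y_i$ from surviving as an intersection of the final section with $\u\cD$ over $b_i$, I attach a free rational curve $E_i \subset \u\cD_{b_i}$ --- which exists since the fiber of the SRC center $\uD$ over $b_i$ is itself SRC --- at $y_i$, providing the boundary flexibility needed to absorb this tangency during the smoothing. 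Gluing each $C_i \cup E_i$ to $\sigma_0(B)$ at the node $\sigma_0(b_i)$ produces a pre-stable log map $f : C \to (\u\cX, \u\cD)$ equipped with the natural log structure recording all tangencies with $\u\cD$.

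The main step --- and the principal obstacle --- is to smooth $f$ in the log category so that the resulting stable log map has smooth source of genus $g(B)$, meets $\u\cD$ only over $T$, and sends $b_i$ to $x_i$. The required vanishing of $H^1$ of the log normal sheaf of $f$ follows from very freeness of each $C_i$ and freeness of each $E_i$, after attaching sufficiently many additional free $\A^1$-teeth along $\sigma_0$ to kill any residual obstruction; the deformation theory of stable log maps of \cite{GS, Chen, AC} then smooths the nodes unobstructedly. The delicate verification is that the log structures at the attaching nodes are chosen so that the smoothing genuinely absorbs the $b_i$-tangencies into the boundary teeth $E_i$ --- sliding $y_i$ along $E_i \subset \u\cD_{b_i}$ during the first stage of smoothing, and then collapsing the resulting configuration during the second --- rather than leaving them as residual intersections of the final section with $\u\cD$ over $b_i$. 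Composition with $\pi$ forces the smoothed source to be isomorphic to $B$ by degree one, so one obtains the desired $T$-integral section through each $x_i$. The Zariski density statement then follows at once, since the strongly $\A^1$-uniruled locus is open and dense in each smooth fiber, so varying $\{x_i\}$ produces $T$-integral sections sweeping out a dense subset of $\u\cX$.
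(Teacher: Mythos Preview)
Your comb is built the wrong way around, and the ``absorption'' step is where the argument breaks. In the log category the contact markings and their orders are discrete invariants that persist under deformation, so the only way the smoothed section can be $T$-integral is if the stable log map you start from already has all its contact markings over $T$. But each $\AA^1$-tooth $C_i$ contributes contact $e_i$ with $\u\cD$ over $b_i\notin T$, and each auxiliary free $\AA^1$-tooth you add to kill $H^1$ contributes further contact over a general point $p_j\notin T$. Attaching a free rational curve $E_i\subset\u\cD_{b_i}$ at $y_i$ does not make this go away: for the configuration $C_i\cup E_i$ to carry a log structure with \emph{no} contact marking, the balancing condition on the boundary component $E_i$ forces $\deg N_{\u\cD/\u\cX}|_{E_i}=-e_i<0$. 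When $N_{\u\cD/\u\cX}$ is nef on $\u\cD$ (for instance $(\uX,\uD)=(\PP^n,\text{hyperplane})$, or any log Fano with ample boundary) no such $E_i$ exists, so your comb cannot be lifted to a log map of the required shape, and your two-stage ``slide $y_i$ along $E_i$'' picture has no meaning in the moduli space of stable log maps. The invocation of Theorem~\ref{thm:local} to manufacture a $T$-integral handle is also unjustified here and in any case unnecessary.

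The paper avoids this entirely by taking the handle to be a section $\uf_0:B\to\u\cD$ of the \emph{boundary} family (which exists by Graber--Harris--Starr since $\uD$ is SRC), and attaching the $\AA^1$-teeth $g_i,h_j$ to the handle at their contact markings $g_i(\infty),h_j(\infty)\in\u\cD$. A single contact marking $\sigma$ is placed on the handle at a point of $T$. The key technical step (Lemma~\ref{lem:lift-to-log}) shows that once enough auxiliary teeth $h_j$ are attached at general points $p_j$ so that $N_{\u\cD/\u\cX}|_{\uC_0}\cong\cO_{\uC_0}(c\sigma-\sum e_ib_i-e\sum p_j)$ with $c\ge 0$, the whole comb lifts to a stable log map to $\cX$ with that single contact marking $\sigma\in T$ of order $c$; all the teeth contacts are channelled through the handle to $\sigma$ rather than surviving as separate markings. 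Vanishing of $H^1$ of the log normal sheaf then follows from freeness of the teeth together with positivity of $N_{\uf_0}$ in $\u\cD$, and a generic log smoothing produces the $T$-integral section through the $x_i$.
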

}

We define the \emph{strongly $\A^1$-uniruled locus} of a log smooth variety $X = (\uX ,\uD)$ to be the open subset of $\uX \setminus \uD$, consisting of points in the image of a free $\AA^1$-curve.



 By \cite[Corollary 1.10]{CZ}, the above theorem generalizes the previous work of Hassett and Tschinkel \cite[Theorem 1]{HT-log-Fano}. It also includes the pairs $(\P^1,{\infty})$ and Hirzebruch surface $H_n$ with the $(-n)$-curve as the boundary, where the original argument of Hassett and Tschinkel does not apply. We wish to further study Zariski density and strong approximation in our subsequent work for $\AA^1$-connected varieties with more general boundaries.

\subsection{Notations}\label{ss:notations}

In this paper, all log structures are fine and saturated \cite[Section 2]{KKato}. Capital letters such as $C, S, X, Y$ are reserved for log schemes. Their associated underlying schemes are denoted by $\uC, \uS, \uX, \uY$ respectively.

A log scheme $X$ is called of {\em rank one}, if the geometric fiber of the characteristic monoid $\ocM_{X,x} := \cM_{X,x}/\cO^*_{X,x}$ is either $\NN$ or $\{0\}$ for any geometric point $x \in X$. Given a pair $(\uX, \uD)$ with $\uD \subset \uX$ a cartier divisor, denote by $X$ the canonical log scheme associated to the pair $(\uX, \uD)$, see \cite[Complement 1]{KKato}. Such log scheme $X$ is of rank one. For simplicity, we may write $X = (\uX, \uD)$ to denote the corresponding log scheme and the underlying pair. We say that a log smooth proper variety $X=(\uX,\uD)$ of rank one is \emph{simple} if the boundary divisor is irreducible and smooth. We will keep using the terminology in \cite{AC}, and call $\uD$ \emph{the center}. 


Let $K$ be a field, and $X$ be a proper, log smooth $K$-variety defined by a log smooth pair $(\uX, \uD)$ such that $\uD \subset \uX$ is a smooth divisor. 

Given a stable log map $f: C/S \to X$ over $S$, a marking $\Sigma \subset C$ is called a {\em contact marking} the the corresponding contact order is non-trivial. See \cite[Section 3.8]{AC} and \cite{ACGM} for more detains of contact orders. 

Recall that a stable log map $f: C/S \to X$ is {\em non-degenerate} if the log structure $\cM_{S}$ is trivial over every geometric point on $\uS$. In this case, the log scheme $S$ is equipped with the trivial log structure, the underlying source curve $\uC$ is necessarily a smooth curve, and $f$ sends the locus of $C$ with the trivial log structure to the locus of $X$ with the trivial log structure. These follow from the definition of stable log maps, see for example \cite[Section 2.2]{AC} and \cite[Construction 3.3.3]{Chen}.

A stable log map is called an {\em $\AA^1$-map} if it is a non-constant, genus zero stable log map with precisely one contact marking. An $\AA^1$-map is called an {\em $\AA^1$-curve} if the corresponding stable log map is non-degenerate.


We use $\fM_{\A^1,n}(X,\beta)$ to denote the log algebraic $K$-stack of stable $\A^1$-{maps} with target $X$, $n$ non-contact markings, and curve class $\beta \in H_{2}(\uX)$. Denoted by $\u{\fM}_{\A^1,n}(X,\beta)$ its underlying algebraic stack.  When $n = 0$, we write $\u{\fM}_{\A^1}(X,\beta)$ instead of $\u{\fM}_{\A^1,0}(X,\beta)$.

Let $\u{s}$ be a $K$-point of $\uD$. Denote by $\u{\fM}_{\A^1}(X,\beta;\u{s})$ the fiber of the contact evaluation morphism
$$\u{\fM}_{\A^1}(X,\beta)\to \uD$$
over $\u{s}$.

\subsection*{Acknowledgments}
The authors would like to thank the anonymous referee for his/her detailed comments and suggestions on the manuscript.


\section{A Gluing technique}\label{sec:glue}

\begin{definition}\label{def:comb}
Let $K \subset L$ be any field extension. An {\em $\AA^1$-comb} over $\uS = \spec L$ is a stable $\A^1$-{map} $f: C/S \to X$ in $\u\fM_{\A^1}(X,\beta)(\uS)$ satisfying:
\begin{enumerate}
 \item the underlying curve $\uC$ of $C$ is given by a union of irreducible components $\uC_0, \uC_1, \cdots, \uC_m$ over $\uS$ such that $\uC$ is obtained by joining $\uC_0$ and $\uC_i$ along two $L$-points $q_i: \uS \to \uC_0$ and $p_i: \uS \to \uC_i$ for each $i \neq 0$.
 \item the unique contact marking is given by an $L$-point $q_{\infty}: \uS \to \uC_0$.
 \item the general fiber of the restriction $f_i := f|_{\uC_i}$ over $\uS$ defines a family of $\AA^1$-curves on $X$ for $i \neq 0$.
\end{enumerate}
We call $f_i$ the {\em $\AA^1$-tooth} of $f$, and $\uC_0$ the {\em handle} of $f$.
\end{definition}

We introduce an $\AA^1$-comb construction when the teeth are Galois conjugate to each other.

\begin{proposition}\label{prop:Galois-gluing}
Let $K \subset L$ be a Galois extension with $G = \Gal(L/K)$. Given an $K$-rational point $\u{s} \in \uD(K)$, and $\A^1$-curves $[f_i: C_i \to X ] \in \u\fM_{\A^1}(X,\beta;\u{s})(L)$ for $i = 1,\cdots, m$ with $m\geq 2$, such that they are contained in a $G$-orbit under the Galois action. Then there exists an $\AA^1$-comb $[f: C/S \to X] \in\u\fM_{\A^1}(X/K,\beta';\u{s})(L)$ with $S$ the standard log point over $ \spec L$ satisfying:
\begin{enumerate}
 \item $\beta' = m\cdot \beta$;
 \item $f_i$ is the tooth of $f$ for each $i$;
 \item $\uC$ is obtained by gluing $\uC_1, \cdots, \uC_m$ along $m$ different $L$-rational points of $\uC_0 = \PP^1$ contained in a $G$-orbit;
 \item $f$ contracts the handle $\uC_0$ to the $K$-rational point $\u{s}$;
 \item the log structure on $S$ is minimal in the sense of \cite{Chen, AC}.
\end{enumerate}
If furthermore the set of $\AA^1$-curves $\{[f_i]\}$ forms a complete Galois orbit, then $[f]$ is $G$-invariant, and descents to a $K$-rational point in $\u\fM_{\A^1}(X/K,\beta';\u{s})(K)$.
\end{proposition}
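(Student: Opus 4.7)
The plan is to build the $\AA^1$-comb $f$ by first specifying its underlying map, then enriching it with the minimal log structure, and finally invoking Galois descent under the complete-orbit hypothesis.

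For the underlying construction, I would take the handle $\uC_0 = \PP^1_L$ with a distinguished $L$-rational point $q_\infty$. Let $H \subset G$ be the stabilizer of $f_1$ under the Galois action, so the orbit $\{f_i\}$ is indexed by $G/H$. Since $L$ is infinite in all intended applications (where $K$ is large, hence infinite), I can choose an $L$-point $q_1 \in \uC_0(L) \setminus \{q_\infty\}$ whose stabilizer inside $G$ is exactly $H$, then set $q_i := \sigma_i \cdot q_1$ for a system of coset representatives $\sigma_i$. This produces $m$ distinct $L$-points $q_1,\dots,q_m$ forming a single Galois orbit with stabilizer pattern matching that of $\{f_i\}$. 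I then form $\uC$ by joining $\uC_i$ to $\uC_0$ at the node $q_i = p_i$ for each $i$, where $p_i$ is the contact marking of $f_i$ (which maps to $\u{s}$), and define $\uf$ by contracting $\uC_0$ to $\u{s}$ and setting $\uf|_{\uC_i} = \uf_i$. The hypothesis $m\geq 2$ ensures $\uC_0$ carries $m+1 \geq 3$ special points, so the handle is stable.

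Next, I would equip $\uf$ with a stable log map structure. Around each node $q_i = p_i$ the log structure is the standard log smooth nodal chart, and the map is determined by the pre-existing data of $f_i$ near $p_i$ and of $X$ near $\u{s}$. On the contracted handle, the pullback log structure from $\u{s} \in \uD$ is a rank-one log structure that must be absorbed by the base. Following the minimal-monoid recipe of \cite{Chen, AC}, I would compute the minimal characteristic monoid of $S$ as the pushout of the local characteristic monoids at $q_\infty$ (with contact order $m \cdot c$, where $c$ is the common contact order of each $f_i$) and at the $m$ nodes, modulo the balancing relation at the contracted handle $\uC_0$. The total intersection with $\uD$ along the comb is $m \cdot c$, matching $\beta' = m\beta$, so the balancing condition at $\uC_0$ is globally consistent. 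A direct calculation then shows the pushout reduces to $\NN$, so $S$ is the standard log point over $\spec L$ and the log structure is minimal.

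For the final assertion, every ingredient of the construction has been chosen $G$-equivariantly when $\{f_i\}$ is a complete orbit: the handle $\PP^1$ and the contact marking $q_\infty$ descend to $K$, the contraction target $\u{s}$ is $K$-rational, and the orbit $\{q_i\}$ was arranged to match $\{f_i\}$ with the same stabilizer pattern. The minimal log structure is functorially determined by the underlying data, hence is also $G$-equivariant. Therefore $[f] \in \u\fM_{\A^1}(X/K,\beta';\u{s})(L)$ is $G$-invariant; since the target moduli stack is a Deligne--Mumford stack locally of finite type over $K$, Galois descent yields a $K$-rational point. The main obstacle here is the log-structure computation on the contracted handle: one must carefully reconcile the rank-one log structure pulled back from $\u{s} \in \uD$ with the contact orders distributed among $q_\infty$ and the $m$ nodes, and verify that the resulting minimal monoid on $S$ is exactly $\NN$. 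The remaining steps — matching Galois orbits on $\PP^1$ and descent — are formal once this log calculation is in place.
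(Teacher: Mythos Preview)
Your underlying-curve construction and the Galois-descent endgame match the paper's strategy, but the central step --- producing the log structure on the comb --- is handled quite differently, and your version has a genuine gap.

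The paper does \emph{not} construct the log map by computing the minimal monoid directly. Instead it passes through the \emph{expansion} $W = \PP(N \oplus \cO_{\uD}) \cup_{\uD} \uX$ along $\uD$: the handle $\uC_0$ is mapped nontrivially into the $\PP^1$-fiber over $\u{s}$ via an explicit rational function in $H^0(\cO_{\PP^1}(mc\cdot q_\infty - \sum_i c\cdot q_i))$, so on the expansion the map is nowhere contracted and the log structure is induced from the underlying tangency data. One then needs a compatible system of isomorphisms $\cM_S \cong \cN_i$ between the base log structure and the canonical log structures at the nodes; the paper supplies these isomorphisms \emph{using the Galois action itself}, and finally composes with the contraction $\pi: W \to X$. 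This is what makes the Galois descent transparent: when $\{f_i\}$ is a full orbit, the rational section can be chosen over $K$, and the node identifications are literally Galois conjugation.

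Your proposal instead asserts that the minimal-monoid recipe of \cite{Chen, AC} will produce the log map. But computing the minimal characteristic monoid only tells you what $\ocM_S$ \emph{would} be if a stable log map existed; it does not by itself construct the morphism of log structures $f^\flat: f^*\cM_X \to \cM_C$ over the contracted handle. You flag this as ``the main obstacle'' but do not resolve it. The expansion technique is precisely the device that furnishes $f^\flat$ concretely, and it simultaneously makes the $G$-equivariance explicit rather than relying on functoriality of an unspecified construction. If you want to avoid the expansion, you would need to invoke (and verify the hypotheses of) an existence theorem for basic log maps with prescribed tropical type, and then separately argue that the resulting map is $G$-equivariant; neither step is carried out in your proposal.
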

\begin{proof}
Comparing with the case of usual stable maps, the major difficulty is to construct morphism on the level of log structures. We split the construction into several steps.

\bigskip
{\sc Step 1. Construct the underlying map.} 

Choose $\uC_0 = \PP^1$ defined over $K$ with prescribed $m$ different $L$-rational points
\begin{equation}\label{equ:special-pts}
q_1,\cdots, q_m,
\end{equation}
and a $K$-rational point $q_{\infty}$, which will be the contact marking of the $\AA^1$-comb. We may choose the $L$-rational points in (\ref{equ:special-pts}) contained in a $G$-orbit compatible with the Galois action on $\{f_i\}$. Let $\uC$ be the nodal curve over $L$ obtained by gluing $\uC_0$ and $\uC_i$ by identifying $q_i$ with the contact marking $p_i \in \uC_i$. Then $\uf: \uC \to \uX$ is defined by gluing $\uf_i$ with the contraction map $\uf(\uC_0) = \u{s}$. 

\bigskip
{\sc Step 2. Expansion along $\uD$}

Denote by $N := N_{\uD/\uX}$ the normal bundle of $\uD$ in $\uX$, and form $\PP = \PP(N\otimes \cO_{D})$. Thus, we have a $\PP^1$-fiberation:
\[
\phi: \PP \to \uD
\]
with two disjoint sections $\uD_{0} \cong \uD_{\infty} \cong \uD$ such that 
\[
N_{\uD_0 / \PP} \cong N_{\uD_{\infty} / \PP}^{\vee} \cong N^{\vee}.
\]
Consider $\uW = \PP \cup_{\uD_0 \cong \uD} \uX$ obtain by gluing $\PP$ and $\uX$ using the canonical identification $\uD_{0} \cong \uD$. By \cite{LogSS}, there is a canonical log smooth family
\begin{equation}\label{equ:log-expansion}
\psi: W \to B
\end{equation}
over $\underline{\psi}: \uW \to \uB := \spec K$. The underlying family $\underline{\psi}$ is called the {\em expansion along $\uD$}. We call $\psi$ the {\em logarithmic expansion along $D$}. Note that we have a natural morphism of log schemes
\begin{equation}\label{equ:log-contract}
\pi: W \to X
\end{equation}
whose underlying morphism $\u{\pi}$ is the contraction of the $\PP^1$-fiberation $\phi$. This can be shown by a similar argument as in for example \cite[Proposition 6.1]{GS}.

\bigskip
{\sc Step 3. Lift $\uf$ to underlying stable map to the expansion}

Denote by $c = \beta \cap \uD \in \ZZ_{>0}$. The integer $c$ is the contact order of $f_i$ at the contact marking $p_{i}$ for each $i$. Since both $\uX$ and $\uD$ are defined over $K$, the fiber of the restriction $\phi|_{\u{s}}$ is a $\PP^1_K$ defined over $K$. We then construct the underlying stable map 
\[
\uf_0': \uC_0 \cong \PP^1 \to \PP^1_K
\]
such that 
\begin{enumerate}
 \item $\uf_{0}'$ factors through $\PP^1_{\uB}$;
 \item $\uf_{0}'$ tangent to $\uD_0$ at $q_i$ with contact order $c$ for $i = 1,\cdots,m$;
 \item $\uf_0'$ tangent to $\uD_{\infty}$ at $q_{\infty}$ with contact order $m\cdot c$.
\end{enumerate}
Such $\uf_{0}'$ can be defined by choosing a non-zero $L$-rational function in 
\begin{equation}\label{equ:rational-section}
H^0(\cO_{\PP^1}(m\cdot c \cdot q_{\infty} - \sum_i c\cdot q_i)).
\end{equation}
Gluing $\uf_0'$ and $\uf_i$ by identifying the $L$-rational points $p_i$ and $q_{i}$. , we obtain the underlying stable map $\uf': \uC \to \uW$. 

\bigskip
{\sc Step 4. Lift $\uf'$ to a stable log map to $W/B$.}

We next construct a stable log maps $f'$ over $\uf'$ as in the following commutative diagram
\begin{equation}\label{diag:lift-expansion}
\xymatrix{
C \ar[r]^{f'} \ar[d] & W \ar[d]^{\varphi} \\
S \ar[r]^{h} & B
}
\end{equation}
where $\uS \cong \uB$. 

Denote by $C^{\sharp} := (\uC, \cM^{\sharp}) \to B^{\sharp} := (\uB, \cM_{B^{\sharp}})$ the log curve with the canonical log structure over the underlying curve $\uC$. Let $\sigma_i \in \uC$ be the node obtained by gluing $q_i$ and $p_i$. By \cite{LogSS}, there is a canonical log structure $\cN_{i}$ over $\uB$ associated to the node $\sigma_i$ of the underlying curve $\uC$. Furthermore, we have 
\[
\cM^{\sharp}_{B} \cong \cN_{1}\oplus_{\cO^*}\cdots \oplus_{\cO^*}\cN_{m}.
\]

Since the log structure $\cM_B$ and $\cN_i$ are canonically associated to the underlying structure of the fibers, by the same argument as in \cite[Section 5.2.3]{Kim}, for each $i \neq \infty$ the underlying map $\uf'$ induces a morphism of log structures defined over $L$:
\begin{equation}\label{equ:node-rt}
h_i: \cM_{B} \to \cN_{i}.
\end{equation}
To construct the stable log map as in (\ref{diag:lift-expansion}), it suffices to construct a log scheme $S = (\uS, \cM_{S})$ with isomorphisms
\begin{equation}\label{equ:node-rt1}
\cM_{S} \cong \cN_i , \ \ \  \mbox{for each $i$}.
\end{equation}
Since by our construction of the morphism, the Galois action provides a canonical set of such isomorphisms by permuting the nodes and the underlying maps. This provides the log map as needed.

\bigskip

Finally, the composition $f:=\pi\circ f': C/S \to X$ is a stable log map to $X$ lifting the underlying stable map $\uf$ as in {\sc Step 1}, which fulfills the conditions as in the statement. The minimality in (5) follows from a direct calculation of the minimal monoid.

When the set of $\AA^1$-teeth forms a complete Galois orbit, we notice that the rational section of (\ref{equ:rational-section}) can be choosing defined over $K$. Since the isomorphism (\ref{equ:node-rt1}) is given by the Galois conjugation, the $\AA^1$-map is stable under the Galois action, hence descents to a $K$-rational point as in the statement.
\end{proof}



\begin{lemma}\label{lem:1}
Let $\overline{K}$ be an algebraic closure of $K$, and $X=(\uX,\uD)$ be a log smooth,  proper, simple, and separably $\A^1$-connected $\overline{K}$-variety of rank one, see Section \ref{ss:notations} for the terminologies. Further assume that $\uD$ is separably rationally connected. Then there exists a very free $\A^1$-curve over $\overline{K}$ through any $\overline{K}$-rational point of $\uD$.
\end{lemma}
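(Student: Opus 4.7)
The plan is to combine the separable $\A^1$-connectedness of $X$ with the SRC property of $\uD$ through a comb-and-smoothing argument in the framework of stable log maps. First, I would show that there is an open dense subset $U \subset \uD$ such that every point of $U$ is the image of the contact marking of some very free $\A^1$-curve defined over $\overline K$. Separable $\A^1$-connectedness provides at least one very free $\A^1$-curve $f_0$; by log deformation theory, the contact evaluation
\[
\bev\colon \u{\fM}_{\A^1}(X, \beta) \longrightarrow \uD
\]
is smooth at points corresponding to very free $\A^1$-curves, so its image is open. Since $\uD$ is irreducible by the ``simple'' hypothesis, this image is automatically dense.

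Given an arbitrary $\u{s} \in \uD(\overline K)$, I would then use that $\uD$ is SRC to fix a very free rational curve $g\colon \PP^1 \to \uD$ with $g(\infty) = \u{s}$. Very-freeness of $g$ ensures that $g(\PP^1)$ meets $U$ in a dense open subset, so we can pick arbitrarily many distinct points $t_1,\dots,t_m \in \PP^1 \setminus \{\infty\}$ with $g(t_i) \in U$, together with very free $\A^1$-curves $f_i$ whose contact markings $p_i$ are sent to $g(t_i)$. Assembling these, we form a degenerate stable log map $F\colon C \to X$: the source $C$ is the nodal curve $\PP^1_0 \cup \bigcup_{i=1}^m \PP^1_{f_i}$ obtained by identifying $t_i \in \PP^1_0$ with $p_i \in \PP^1_{f_i}$; the handle $\PP^1_0$ maps into $\uD$ via $g$; each tooth maps via $f_i$; and the single contact marking is $\infty \in \PP^1_0$, sent to $\u{s}$.

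The final step is to log-smooth this comb to an $\A^1$-curve $\tilde F$ through $\u{s}$ and verify that $\tilde F$ is very free. Positivity comes from two sources: the teeth contribute through their very-freeness as $\A^1$-curves, while the handle contributes through the short exact sequence $0 \to T_\uD \to T_\uX|_\uD \to N_{\uD/\uX} \to 0$ together with the very-freeness of $g$ inside $\uD$. For $m$ sufficiently large, the log-deformation obstructions vanish and the smoothing exists; the resulting $\tilde F$ has a single contact marking sent to $\u{s}$ with contact order $\sum_i c_i$, and its log-normal bundle inherits enough ampleness to be very free. The main obstacle will be this smoothing step, since the handle lies entirely in the boundary divisor: one must verify, within the framework of \cite{GS, Chen, AC}, that the log obstructions actually vanish for large $m$ and that smoothing such a comb truly yields an $\A^1$-curve (rather than a log map with extra contact markings or a residual boundary component) with sufficient positivity to be very free.
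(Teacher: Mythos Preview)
Your comb-and-smoothing strategy is the same as the paper's, but you work harder than necessary. The paper attaches a \emph{single} tooth: given $p\in\uD$, it takes a free (not very free) rational curve $f\colon\PP^1\to\uD$ joining $p$ to a general point $q$, then chooses one very free $\A^1$-curve $g$ with contact marking at $q$ and with the numerical condition $\deg\bigl(f^*\cO_{\uX}(\uD)+g^*\cO_{\uX}(\uD)\bigr)>0$. This single inequality is exactly what is needed to invoke \cite[Lemma~3.6]{CZ} and lift the two-component underlying map to a stable log map with contact marking at $p$; a general smoothing then finishes. Your version replaces this by taking $m\gg0$ teeth and a very free handle, which also works but obscures the mechanism: the point is not that many teeth force vanishing of obstructions, but that the total contact order at the marking must be positive, and one tooth of sufficiently high contact order already achieves this.

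Two small corrections. First, the contact order of the smoothing at $\u{s}$ is not $\sum_i c_i$ but $\sum_i c_i + \deg g^*N_{\uD/\uX}$, by balancing on the handle; this is why the paper's degree condition is the right hypothesis. Second, the sentence ``Assembling these, we form a degenerate stable log map'' hides the only nontrivial step: a nodal underlying map with the handle inside $\uD$ does not automatically carry a log lift, and the existence of such a lift is precisely the content of \cite[Lemma~3.6]{CZ} (or the analogous expansion argument in Proposition~\ref{prop:Galois-gluing} and Lemma~\ref{lem:lift-to-log} of this paper). You correctly flag this as the main obstacle, but you should cite that lemma rather than leave it as something to be checked.
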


\proof The proof is similar to that of \cite[Theorem 1.9]{CZ}. We give a sketch as follows. Since $\uD$ is SRC, by \cite[IV.3]{Kollar}, given any point $p\in \uD$, there exists a free rational curve $f:\P^1\to \uD$ connecting $p$ and a general point $q$. By separably $\A^1$-connectedness, we may choose a very free $\A^1$-curve $g:(\P^1,\{\infty\})\to (\uX,\uD)$ with the boundary marking $q$ such that $\deg (f^*\O_{\uX}(\uD)+g^*\O_{\uX}(\uD))>0$. By \cite[Lemma 3.6]{CZ}, we can glue $f$ and $g$ into a stable $\A^1$-map with the contact marking $p$. A general smoothing of the stable $\A^1$-map will do the job.\qed

\begin{proof}[Proof of Theorem \ref{thm:local}] 
Given a $K$-point $p\in \uD(K)$, by Lemma \ref{lem:1} we may choose a finite Galois extension $L$ over $K$ such that there exists a very free $\A^1$-curve $f_1$ passing through $p_L$. By Proposition \ref{prop:Galois-gluing}, gluing the Galois orbit of $f_1$, we obtain an $\A^1$-comb $f\in \fM_{\A^1}(X/K,m\beta;\u{s})(K)$. We may further assume that $f$ is automorphism-free. By construction, $f$ is unobstructed and the minimal log structure on $S$ has rank one. Thus, it gives a smooth point of the underlying scheme $\u\fM_{\A^1}(X/K,m\beta;\u{s})(K)$. Since the set of very free $\A^1$-curves through $p$ forms a dense open subset of $\u\fM_{\A^1}(X/K,m\beta;\u{s})$, the theorem is proved when $K$ is large.\end{proof}


\begin{remark}
When $X$ is log Fano and the normal bundle of $D$ is nontrivial and effective, there is a simple proof using Koll\'ar's result and {\cite[Lemma 3.5]{CZ}}. However, our condition is weaker. The normal bundle of $\uD$ could be negative, for example the Hirzebruch surface with the $(-n)$-curve as the boundary.
\end{remark}









\section{Zariski density}
\proof[Proof of Theorem \ref{thm:integral-pts}]
The proof contains several steps as below. 

\bigskip

{\sc Step 1. Reduction to the smooth model.} 

Given an integral model
$$\pi:(\u\cX,\u\cD)\to B$$
with the generic fiber $(\uX,\uD)$, c.f., \cite[Definition 4]{HT-log-Fano}, by the resolution of singularities of pairs, we get a proper birational morphism
$$\u\rho:\u\cX'\to \u\cX$$
such that 
\begin{itemize}
\item $\u\cX'$ is smooth and $\u\rho^{-1}(\u\cD)$ is normal crossings; 
\item denote $\u\cD'$ the proper transform of $\u\cD$, which is smooth as well;
\item $\u\rho$ is an isomorphism over the open subset of $(\u\cX,\u\cD)$ where $\u\cX$ and $\u\cD$ are both smooth. Thus $(\u\cX',\u\cD')$ is an integral model as well.
\end{itemize}
In particular, we get the following lemma.

\begin{lemma}
With the notations as above, if $T$ is a non-empty finite set of places of $B$ containing the images of the singularities of $\u\cX$ and $\u\cD$, then the set of $T$-integral points on $(\u\cX',\u\cD')$ coincides with the set of $T$-integral points on $(\u\cX,\u\cD)$.\qed
\end{lemma}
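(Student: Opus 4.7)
The plan is to set up an explicit bijection between the two sets of $T$-integral points, induced by the birational morphism $\u\rho$. The key geometric fact is that, by the choice of $T$, the map $\u\rho$ is an isomorphism away from $\pi^{-1}(T)$: indeed, the third bullet states that $\u\rho$ is an isomorphism over the locus where $\u\cX$ and $\u\cD$ are both smooth, and by hypothesis all singularities of $\u\cX$ and $\u\cD$ lie over $T$. Call this open locus $\u\cU \subset \u\cX$ (resp.\ $\u\cU' \subset \u\cX'$), so that $\u\rho: \u\cU' \xrightarrow{\sim} \u\cU$ and $\pi^{-1}(B\setminus T) \subset \u\cU$.

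Given a $T$-integral point $\sigma: B \to \u\cX$, i.e.\ a section of $\pi$ with $\sigma(B \setminus T) \subset \u\cX \setminus \u\cD$, the idea is to lift it uniquely to a section $\sigma': B \to \u\cX'$. First I would lift $\sigma|_{B \setminus T}$ to $\u\cU'$ via the inverse of $\u\rho|_{\u\cU'}$. Then, since $B$ is a smooth (hence regular) curve and $\u\cX' \to B$ is proper, the valuative criterion of properness, applied at each point of $T$, produces a unique extension to a section $\sigma': B \to \u\cX'$. To verify that $\sigma'$ is a $T$-integral point on $(\u\cX', \u\cD')$, I would note that over $B \setminus T$ we have $\u\cD' \cap \u\cU' = \u\rho^{-1}(\u\cD) \cap \u\cU'$, so $\sigma(b) \notin \u\cD$ for $b \notin T$ immediately gives $\sigma'(b) \notin \u\cD'$.

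Conversely, given a $T$-integral point $\sigma': B \to \u\cX'$, set $\sigma := \u\rho \circ \sigma'$. This is a section of $\pi$, and the same identification $\u\cD' \cap \u\cU' = \u\rho^{-1}(\u\cD)\cap \u\cU'$ shows that $\sigma'(b) \notin \u\cD'$ implies $\sigma(b) \notin \u\cD$ for $b \notin T$. The two assignments $\sigma \mapsto \sigma'$ and $\sigma' \mapsto \u\rho \circ \sigma'$ are mutually inverse: one direction holds by construction, and the other by the uniqueness part of the valuative criterion, combined with the fact that $\u\rho \circ \sigma' = \sigma$ already on the dense open $B \setminus T$.

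The only mildly technical step is the extension across $T$ via the valuative criterion; everything else is a direct unpacking of definitions, using that $\u\rho$ is a proper birational morphism which is an isomorphism over the complement of $\pi^{-1}(T)$. I expect no serious obstacle beyond carefully writing down these verifications.
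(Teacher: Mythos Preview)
Your argument is correct and is exactly the standard verification the paper has in mind: the lemma in the paper is stated with an immediate \qedsymbol\ and no written proof, so the authors regard it as a direct consequence of the setup (namely, that $\u\rho$ is an isomorphism over $B\setminus T$ and that $\u\cX'\to B$ is proper, so sections lift uniquely by the valuative criterion). Your write-up simply spells out these implicit details.
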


Therefore, to prove Theorem \ref{thm:integral-pts}, we may assume that both $\u\cX$ and $\u\cD$ are smooth. 

\bigskip
{\sc Step 2. Comb construction as a usual stable map.}

\begin{construction}\label{con1}
{Recall that $\{b_i\}_{i \in I}$ is a finite set of places of good reductions away from $T$.} Let $U_1$ be the divisor $\sum_{i\in I}b_i$. By assumption, for each $i\in I$, we may choose a free $\A^1$-curve 
$$g_i:=(\P^1,\infty)\to (\u\cX_{b_i},\u\cD_{b_i})$$ 
such that 
\begin{itemize}
\item $\ug_i(0)=x_i$;
\item $\ug_i(\infty)=y_i\in \u\cD$;
\item $\deg_{\P^1} \ug_i^*(\u\cD)=e_i$,
\end{itemize}  
{where $x_i$ is a point in the strongly $\A^1$-uniruled locus of $\cX_{b_i}$.}
\end{construction}
\begin{construction}\label{con2}

Pick a finite collection of general points $\{p_j\in B\}_{j\in J}$ away from $T\cup U_1$ such that for each $j\in J$, there exists a free $\A^1$-curve $$h_j:(\P^1,\{\infty\})\to  (\u\cX_{b_j},\u\cD_{b_j})$$
with $\deg_{\P^1}\u{h}_j^*(\u\cD)=e$. Let $U_2$ be the divisor $\sum_{j\in J}p_j$. 

\end{construction}

\begin{construction}\label{con3}

 Since the geometric generic fiber of $\u\cD \to \uB$ is rationally connected, by \cite{KMM,GHS}, there is a section $\uf_0: \uB \to {\u\cD}$ such that: 



\begin{itemize}
 \item $\uf_0(b_i)=y_i$ for each $i\in I$;
 \item $\uf_0(p_j)=\u{h}_j(\infty)$ for each $j\in J$;
 \item $H^1(N_{\uf_0}(-T-U_1-U_2)) = 0$, where $N_{\uf_0}$ is the normal bundle of the image $\uC_0:=\uf_0(B)$ in $\u\cD$.

\end{itemize}\end{construction}

Gluing maps $\uf_0$, $\{g_i\}_{i\in I}$ and $\{h_j\}_{j\in J}$ along the cooresponding points, we obtain the usual stable map 
$$\uf: \uC \to \u\cX.$$
We call $\uf$ a {\em comb} with {\em handle} $\uf_0$, and {\em teeth} $\{g_i\}_{i\in I} \cup \{h_j\}_{j\in J}$.

\bigskip
{\sc Step 3. Lift the comb to a stable log map.}
 

By the following lemma and \cite[Lemma 21]{HT-log-Fano}, there is a stable log map $f:C/S \to X$ lifts $\uf$ as long as the cardinality of $J$ is sufficiently large and all $p_i$'s are general.

\begin{lemma}\label{lem:lift-to-log} 
Notations as above, denote by $\uC_0 = \uf_0(\uB)$. Fix any point $\sigma\in \uC_0$ whose image in $\uB$ is away from $U_1\cup U_2$. Assume that $c \geq 0$, and there is an isomorphism 
\[N_{\u\cD/\u\cX}|_{\uC_0} \cong \cO_{\uC_{0}}(c\cdot \sigma -\sum_{i\in I}e_ib_i- e\sum_{j\in J}p_j).\] 
Then there is a log map $f:C/S \to \cX$ with a unique contact marking $\sigma$ of contact order $c$, where $\cX$ is the log scheme with the divisorial log structure associated to the pair $(\u\cX, \u\cD)$.
\end{lemma}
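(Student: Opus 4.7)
The plan is to apply the expansion-and-contraction strategy from the proof of Proposition \ref{prop:Galois-gluing}, adapted to the situation where the handle $\uC_0$ maps into the boundary $\u\cD$ as the section $\uf_0$ rather than being contracted to a point. The line bundle hypothesis on $N_{\u\cD/\u\cX}|_{\uC_0}$ will be exactly the obstruction-free condition guaranteeing the rational section of the $\PP^1$-bundle needed to encode the prescribed tangencies.

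First, I would form the logarithmic expansion of $(\u\cX,\u\cD)$ along $\u\cD$: let $\PP := \PP(N_{\u\cD/\u\cX} \oplus \cO_{\u\cD})$, set $\uW := \PP \cup_{\uD_0 = \u\cD} \u\cX$, and take the canonical log-smooth family $\psi: W \to \fT$ over a standard log point $\fT$ together with the contraction $\pi: W \to \cX$ of the $\PP^1$-fibration, exactly as in (\ref{equ:log-expansion})--(\ref{equ:log-contract}). Recall that $\PP$ carries two disjoint sections $\uD_0, \uD_\infty \cong \u\cD$ with $N_{\uD_\infty/\PP} \cong N_{\u\cD/\u\cX}$ and $N_{\uD_0/\PP} \cong N_{\u\cD/\u\cX}^\vee$.

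Next, I would lift $\uf_0$ to a map $\uf_0':\uC_0 \to \PP$ factoring through the restriction $\PP|_{\uf_0(\uB)}$, by choosing a non-zero rational section $s$ of $N_{\u\cD/\u\cX}|_{\uC_0}$ with zeroes of orders $e_i$ at $b_i$, zeroes of order $e$ at $p_j$, and a pole of order $c$ at $\sigma$. The hypothesis
\[
N_{\u\cD/\u\cX}|_{\uC_0} \cong \cO_{\uC_0}\!\left(c\cdot \sigma - \sum_{i\in I} e_i\cdot b_i - e\sum_{j\in J} p_j\right)
\]
is exactly the statement that such a rational section exists (its degree and linear equivalence class match), so $\uf_0'$ is determined up to scaling. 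By construction the tangencies of $\uf_0'$ with $\uD_0$ at $b_i$ (resp.\ $p_j$) have order $e_i$ (resp.\ $e$) and therefore match the contact orders of the teeth $g_i, h_j$ at their contact markings, while $\uf_0'$ meets $\uD_\infty$ only at $\sigma$ with tangency $c$.

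Finally, I would glue $\uf_0'$ to $\{g_i, h_j\}$ (viewed as maps into the $\u\cX$-component of $\uW$) at the matching tangency points to obtain an underlying stable map $\uf':\uC \to \uW$, and install log structures mimicking Step 4 of the proof of Proposition \ref{prop:Galois-gluing}: take the canonical log curve over the nodal source $\uC$, use the local construction of \cite[Section 5.2.3]{Kim} at each node to produce the required morphisms of log structures, and amalgamate the results to obtain a stable log map $f':C/S \to W/\fT$ over a minimal log base $S$. Composing with the contraction yields $f := \pi\circ f':C/S \to \cX$; the tangency of order $c$ between $\uf_0'$ and $\uD_\infty$ at $\sigma$ becomes contact order $c$ with $\u\cD$, while the tangencies at the nodes $b_i, p_j$ are absorbed into log-smoothing data rather than producing additional contact markings, so $\sigma$ is the unique contact marking of contact order $c$ as required. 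The main technical content is entirely the log-structure matching at the $|I|+|J|$ nodes between the handle and the teeth, which the Proposition \ref{prop:Galois-gluing} template handles directly; the genuinely new input is the existence of the rational section $s$, and this is precisely guaranteed by the hypothesis on $N_{\u\cD/\u\cX}|_{\uC_0}$.
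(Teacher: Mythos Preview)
Your proposal is correct and follows essentially the same route as the paper: form the expansion along $\u\cD$, use the line bundle hypothesis to lift the handle into the $\PP^1$-bundle with the prescribed tangencies (the paper phrases this as a surjection $\cO_{\uC_0}\oplus N^\vee_{\u\cD/\u\cX}|_{\uC_0}\to\cO_{\uC_0}(\sum e_ib_i+e\sum p_j)$, and cites \cite[Lemma 3.6]{CZ} for the resulting map to the expansion), glue on the teeth, and then install the log structure at the nodes. The only point worth making explicit is that here the node isomorphisms of type (\ref{equ:node-rt1}) need no Galois compatibility as in Proposition~\ref{prop:Galois-gluing}; since we work over an algebraically closed field, any choice of such isomorphisms will do.
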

\begin{proof}
By assumption, we may choose a surjection 
\[
\cO_{\uC_0} \oplus N^\vee_{\u\cD/\u\cX}|_{\uC_{0}} \to \cO_{\uC_0}( \sum_{i\in I}e_ib_i+ e\sum_{j\in J}p_j)
\]
where the restriction to the first factor is given by the divisor $\sum_{i\in I}e_ib_i+ e\sum_{j\in J}p_j$, and to the second factor is given by $c\cdot \sigma$. This induces a morphism 
\[\uC_0 \to \PP(\cO_{\uC_0} \oplus N_{D/X}|_{\uC_{0}})\] 
tangent to $D_{\infty}$ at $\sigma$ of order $c$, and tangent to $D_0$ at $p_i$ of order $c_i$. By the same argument as in \cite[Lemma 3.6]{CZ}, the section $s$ induces a map to the expansion. To further lift the underlying stable map to a stable log map, we will need the set of isomorphisms of the nodes as in (\ref{equ:node-rt1}). But since we are over algebraically closed filed, we could always make a choice of such isomorphisms. This provides the stable log map as needed.
\end{proof}

\bigskip
{\sc Step 4. Deformation of the comb.}

We choose $\sigma$ in Lemma \ref{lem:lift-to-log} to be a point in $T$. Replacing $\cX$ by the product $\cX \times \P^n$, where $\P^n$ is a projective space with the trivial log structure, we may assume that the stable log map $f$ is a log immersion in characteristic zero. Here by \emph{log immersion} we mean a morphism of log schemes $f:C \to \cX$, such that the log differential $df$ is everywhere injective. Thus the log normal bundle $N_f$, the cokernel of $TC\to f^*T\cX$, is locally free.


Next we analyze the positivity of the log normal bundle $N_f(-T-\sum_{i\in I}\uf^{-1}(x_i))$. 
Define $\uE_i$, respectively, $\uF_j$, the domain component of $g_i$, respectively $h_j$. We have the following:
\begin{itemize}
\item By Construction \ref{con1}, $H^1(E_i, N_f|_{\uE_i}(-\uf^{-1}(x_i)))=0$ for each $i\in I$;
\item By Construction \ref{con2}, $H^1(F_j,N_f|_{\uF_j})=0$  for each $j\in J$;
\item By Construction \ref{con3} and \cite[(4.3.7) and Lemma 4.13]{A1}, we have that $H^1(N_{f}|_{\uC_0}(-T-U_1-U_2)) = 0$ as long as the cardinality of $J$ is sufficiently large and all $p_i$'s are general. 
\end{itemize}

By the long exact sequence of the restriction of the log normal bundle, we conclude that 
$$H^1(\uC, N_f(-T-\sum_{i\in I}\uf^{-1}(x_i)))=0.$$

Now we conclude the proof by a general deformation of $f$ using the lemma below.
\qed

\begin{lemma}
Let $X$ be a log smooth variety with boundary $D$, and $f:C\to X$ be a log immersion.  Suppose furthermore that $T'\subset \uC$ is a finite set of smooth points with trivial contact orders such that  
\begin{enumerate}
 \item $f(x) \in X \setminus D$ for any $x \in T'$, and
 \item $H^1(\uC, N_f(-T'))=0.$
\end{enumerate}
Then a general deformation of $f:C\to X$ yields a non-degenerate stable log map as in Section \ref{ss:notations}
\end{lemma}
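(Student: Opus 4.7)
The plan is to apply log deformation theory for log immersions. Since $f$ is a log immersion, the log normal sheaf $N_f$ is locally free and fits into a short exact sequence
\[
0 \to T^{\log}_{C/S} \to f^*T^{\log}_X \to N_f \to 0
\]
on $\uC$, and it governs the infinitesimal deformations of $f$ in the moduli stack of stable log maps developed in \cite{AC,Chen}. Because each point of $T'$ is smooth, has trivial contact order, and maps into $\uX \setminus \uD$, pinning down the images of $T'$ amounts to twisting by $-T'$: first-order deformations fixing the images of $T'$ are parameterized by $H^0(\uC, N_f(-T'))$, with obstructions in $H^1(\uC, N_f(-T'))$. Hypothesis~(2) kills the obstruction, so the relevant moduli stack of stable log maps fixing the images of $T'$ is smooth at $[f]$ of expected dimension.

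Next I would argue that non-degeneracy is an open condition on the moduli stack. By the definition recalled in Section~\ref{ss:notations}, a stable log map is non-degenerate iff its minimal log structure on the base is trivial, equivalently iff the source curve $\uC$ is smooth and the only points mapping into $\uD$ are the contact markings with their prescribed contact orders. Both conditions propagate under generization, so the non-degenerate locus is open in $\u{\fM}_{\A^1,n}(X,\beta)$.

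Finally I would exhibit non-degenerate points inside the versal family constructed in the first step. Each node of $\uC$ contributes a smoothing direction in the versal deformation, and the global obstruction to simultaneously smoothing all nodes lives in the same group $H^1(\uC, N_f(-T'))$, which vanishes. A one-parameter family obtained by simultaneous smoothing has smooth generic source, and therefore trivial minimal log structure on its generic base, so its generic fiber is a non-degenerate stable log map. Combined with the smoothness of the moduli and the openness of the non-degenerate locus, this shows that a general deformation of $f$ is non-degenerate, as required. The main obstacle is the bookkeeping around the minimal log structure: one must verify that the $-T'$ twist really captures the condition of fixing the $T'$-images (which uses both the smoothness of $\uC$ at $T'$ and the triviality of contact orders there), and that simultaneously smoothing the nodes does not leave a residual contribution to the minimal log structure from balancing relations. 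Both checks follow from a careful unwinding of the minimal-monoid construction in \cite{AC,Chen}.
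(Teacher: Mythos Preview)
Your outline is correct and reaches the same conclusion, but the paper's proof packages the argument differently and more efficiently. Rather than separately establishing (i) smoothness of the moduli, (ii) openness of non-degeneracy, and (iii) existence of nearby non-degenerate points via explicit node-smoothing, the paper invokes the tautological morphism $\fM \to \cL og$ to Olsson's stack of log structures: by \cite[Theorem~8.31]{LogCot} the vanishing $H^1(\uC, N_f(-T'))=0$ makes this morphism smooth at $[f]$, and since $\cL og$ has a dense open substack with trivial log structure \cite[Corollary~5.25]{Olsson03}, a general deformation automatically has trivial base log structure, i.e.\ is non-degenerate.

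The comparison is instructive. Your step (iii) is where the $\cL og$ formalism really earns its keep. When you write ``the global obstruction to simultaneously smoothing all nodes lives in the same group $H^1(\uC, N_f(-T'))$'', what you actually need is not smoothness of $\fM$ but smoothness of the map $\fM \to \cL og$, i.e.\ that deformations of $f$ surject onto deformations of the base log structure; making this precise essentially reproves the relevant case of \cite[Theorem~8.31]{LogCot}. Likewise, your implication ``smooth generic source, and therefore trivial minimal log structure'' is true here (because the fixed $T'$-images force the deformed curve out of $\uD$), but it is exactly the statement that one has moved into the trivial-log-structure locus of $\cL og$. So your hands-on argument is sound, but the paper's route avoids the bookkeeping you flag in your last paragraph by appealing once to the structure of $\cL og$.
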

\begin{proof}
Let $\fM$ be the stack of stable log maps containing $[f]$ with the tautological morphism induced by the minimal log structure of $\fM$: 
\[
\fM \to \cL og
\]
where $\cL og$ is the Olsson's stack parameterizing log structures over $\spec \ZZ$, see \cite{Olsson03}. By \cite[Theorem 8.31]{LogCot}, the vanishing condition 
\[H^1(\uC, N_f(-T-\sum_{i\in I}(x_i)))=0\]
implies that the tautological morphism $\fM \to \cL og$ is smooth at $[f]$. Since $\cL og$ has an open dense substack with the trivial log structure \cite[Corollary 5.25]{Olsson03}, a general deformation of $f$ yields a stable log map with the trivial log structure on its base, hence is non-degenerate.
\end{proof}








\bibliographystyle{amsalpha}             

\providecommand{\bysame}{\leavevmode\hbox to3em{\hrulefill}\thinspace}
\providecommand{\MR}{\relax\ifhmode\unskip\space\fi MR }
\providecommand{\MRhref}[2]{%
  \href{http://www.ams.org/mathscinet-getitem?mr=#1}{#2}
}
\providecommand{\href}[2]{#2}

\end{document}